\documentclass[10pt]{amsart}

\usepackage{amsmath,amsfonts, latexsym,graphicx, amssymb, mathtools, enumerate, mdwlist, amscd}

\usepackage{hyperref}
\hypersetup{colorlinks=true, urlcolor=blue, citecolor=blue, linkcolor=blue}

\newcommand{\U}{{\mathcal U}}
\newcommand{\0}{{\mathbf 0}}
\newcommand{\C}{{\mathbb C}}
\newcommand{\Z}{{\mathbb Z}}

\newcommand{\mult}{{\operatorname{mult}}}

\newtheorem{defn0}{Definition}[section]
\newtheorem{prop0}[defn0]{Proposition}
\newtheorem{conj0}[defn0]{Conjecture}
\newtheorem{thm0}[defn0]{Theorem}
\newtheorem{lem0}[defn0]{Lemma}
\newtheorem{corollary0}[defn0]{Corollary}
\newtheorem{example0}[defn0]{Example}
\newtheorem{remark0}[defn0]{Remark}
\newtheorem{question0}[defn0]{Question}
\newtheorem{exercise0}[defn0]{Exercise}

\newenvironment{prop}{\begin{prop0}}{\end{prop0}}

\newenvironment{thm}{\begin{thm0}}{\end{thm0}}
\newenvironment{lem}{\begin{lem0}}{\end{lem0}}
\newenvironment{cor}{\begin{corollary0}}{\end{corollary0}}

\newenvironment{exm}{\begin{example0}\rm}{\end{example0}}
\newenvironment{rem}{\begin{remark0}\rm}{\end{remark0}}

\newcommand{\propref}[1]{Proposition~\ref{#1}}
\newcommand{\thmref}[1]{Theorem~\ref{#1}}
\newcommand{\lemref}[1]{Lemma~\ref{#1}}
\newcommand{\corref}[1]{Corollary~\ref{#1}}

\newcommand{\secref}[1]{Section~\ref{#1}}

\newcommand{\mbf}[1]{{\mathbf #1}}

\title{A L\^e number addendum to the Morse Splitting Lemma}

\subjclass[2020]{32S25, 32S15, 32S55}

\keywords{L\^e numbers, Morse splitting}

\author{David B. Massey}

\date{}

\begin{document}

\begin{abstract} We investigate how the complex analytic Morse Splitting Lemma relates to the generic L\^e numbers of a function, and what the relation tells us about the function. As special cases, we consider functions for which the 0-dimensional generic L\^e number is at most 3 and functions for which the critical locus has dimension 1.
\end{abstract}

\maketitle

\section{Introduction}

 \medskip
 
 Let $\U$ be a connected open neighborhood of the origin in $\C^{n+1}$, let $\mbf z:=(z_0, \dots, z_n)$ be coordinates on $\U$,  and let $f:(\U, \0)\rightarrow (\C, 0)$ be a non-constant complex analytic function. After possibly choosing $\U$ smaller, we may assume that the critical locus, $\Sigma f$, of $f$ is contained in the hypersurface $V(f):=f^{-1}(0)$. We assume that $\0\in\Sigma f$ and let $s:=\dim_\0\Sigma f$.
 
 Since 1990, we have written many, many articles about the L\^e numbers $\lambda^*_{f,\mbf z}(p)$ of hypersurface singularities at points $p\in\Sigma f$ with respect to a linear choice of coordinates $\mbf z$; see, for instance, \cite{levar1}, \cite{levar2},  \cite{lecycles}, \cite{lemassey}, and \cite{lemodtrace}. 
 
At each $p\in\Sigma f$, there is a generic linear choice of coordinates $\mbf z$ such that all of the L\^e numbers are defined; see Theorem 1.28 of \cite{lecycles}. However, for actually calculating in examples, we do not want to simply appeal to the existence of generic coordinates. It is a primary feature of the L\^e numbers that, given a linear choice of coordinates, one can effectively calculate if the L\^e numbers are defined and find their values for the given choice of coordinates $\mbf z$; see Definition 1.11 of \cite{lecycles}. The main results that require only that the L\^e numbers are defined involve Thom's $a_f$ condition and can be found in Chapter 6 of \cite{lecycles}.

However, there have been results where we, and others, have used that, at a given point $p\in \Sigma f$, there are coordinates $\mbf z$ such that the L\^e numbers $\lambda^*_{f, \mbf z}(p)$ take on their generic values (see, for instance, Chapter 10 of \cite{lecycles}); we refer to such coordinates and L\^e numbers as {\bf numerical generic}. Using the complex analytic Curve Selection Lemma and applying Corollary 4.16 of \cite{lecycles}, the numerically generic L\^e numbers at $p$ are those such that  the tuple of L\^e numbers 
 $$
 \big(\lambda^s_{f,\mbf z}(p), \lambda^{s-1}_{f,\mbf z}(p), \dots, \lambda^1_{f,\mbf z}(p), \lambda^0_{f, \mbf z}(p)\big)
 $$
 attains its minimal value in the lexicographical ordering. Note that it is absolutely false that one can select a single tuple of coordinates which are numerically generic at all points near $p$. Henceforth, when we discuss the generic L\^e numbers at $p$, we mean the L\^e numbers with respect to numerically generic coordinates, and we will suppress the reference to the coordinates in the notation. It is trivial that the generic L\^e numbers are invariant under analytic isomorphisms.

Without loss of generality, we take $\0$ as our point of interest in $\Sigma f$. We prove one theorem in this paper, a theorem which is just a L\^e number supplement to the Morse Splitting Lemma, Theorem 2.47 of \cite{GLS}:

\smallskip

\begin{thm}\label{thm:mainintro} Suppose that the rank of the hessian matrix of $f$ at $\0$ is $m$. If $m=n+1$, we are in the standard case where $f$ has a complex nondegenerate isolated critical point at $\0$, i.e., an isolated critical point with Milnor number 1.

If $m\leq n$, then there is an analytic change of coordinates $\eta$ at the origin in $\C^{n+1}$ and $g\in\C\{u_0, \dots u_{n-m}\}$ such that
$$
(f\circ \eta)(w_1,\dots, w_m, u_1, \dots, u_{n-m}) = w_1^2+\dots+w_m^2+g(u_0, \dots, u_{n-m}),
$$
where $\mult_\0 g\geq 3$, $\Sigma (f\circ h)=\{\0\}\times\Sigma g$, and the generic L\^e numbers of $f$ equal the generic L\^e numbers of $g$, i.e., for all $k$ such that $0\leq k\leq s$, $\lambda^k_{f}(\0)=\lambda^k_{g}(\0)$.
\end{thm}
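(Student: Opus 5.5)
The plan has three steps: take the normal form directly from the Morse Splitting Lemma, read off the critical locus from it, and then compute the L\^e numbers of the split function in coordinates adapted to the splitting.

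\textbf{Normal form and critical locus.} The decomposition itself is Theorem 2.47 of \cite{GLS}, with the only addition that $\mult_\0 g\geq 3$. This extra condition follows because the hessian of $f\circ\eta$ at $\0$ has rank $m$, and that rank is already accounted for by $w_1^2+\dots+w_m^2$. So the hessian of $g$ at $\0$ must vanish; since $g(\0)=0$ and $\0\in\Sigma g$, we get $\mult_\0 g\geq 3$. For the critical locus, write $F:=f\circ\eta$. Its partial derivatives are $2w_i$ together with $\partial g/\partial u_j$. Hence $\Sigma F=\{\0\}\times\Sigma g$, and in particular $\dim_\0\Sigma g=s$.

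\textbf{L\^e numbers of the split function.} Generic L\^e numbers are invariant under analytic isomorphism, so it suffices to show $\lambda^k_F(\0)=\lambda^k_g(\0)$ for all $k$. I will use coordinates of the form $(\mbf u,\mbf w)$, where $\mbf u$ is a numerically generic linear choice of coordinates for $g$ at $\0$ and the $w$'s are placed last in the ordering. That is, the hyperplane slices used in the L\^e cycle construction, $V(z_0,\dots,z_{k-1})$, involve only $u$-coordinates.

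With the definitions of \cite{lecycles}, the relative polar varieties $\Gamma^k_{F}$ are defined by the vanishing of $\partial F/\partial z_i$ for $i\geq k$. This always includes $\partial F/\partial w_i=2w_i$, which cuts out $w=0$ transversally. Consequently $\Gamma^k_{F,(\mbf u,\mbf w)}=\{\0\}\times\Gamma^k_{g,\mbf u}$, with the same cycle structure. Because $\Sigma F$ has the product form above, the L\^e cycles satisfy $\Lambda^k_F=\{\0\}\times\Lambda^k_g$, again with equal multiplicities. In the intersection numbers $(\Gamma^{k+1}\cdot V(\partial F/\partial z_k))_\0$, the $w$-directions contribute factor one. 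Therefore $\lambda^k_{F,(\mbf u,\mbf w)}(\0)=\lambda^k_{g,\mbf u}(\0)$ for all $k$.

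An alternative route, which I find cleaner, avoids polar curves. The L\^e numbers are determined by the characteristic cycle, or by the vanishing-cycle data of the generic slices, and vanishing cycles satisfy the Sebastiani--Thom formula. Adding the Morse function $w_1^2+\dots+w_m^2$, whose Milnor number is $1$, tensors the reduced vanishing cohomology of each generic slice with a rank-one group. This formula gives equality of the L\^e numbers at each dimension directly.

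\textbf{Genericity (main obstacle).} The remaining step is to show that these product coordinates are numerically generic for $F$, i.e. that the lexicographic minimum of the L\^e tuple over all linear coordinates on $\C^{n+1}$ is attained at a product choice. I would argue this by upper semicontinuity together with Corollary 4.16 of \cite{lecycles}. The generic L\^e numbers of $F$ can be computed from generic flags of linear subspaces, and any generic flag can be moved to one whose members contain the $w$-directions, since the $w$-part is a nondegenerate quadratic.

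A cleaner way to organize this is through $\C^*$-invariance of the L\^e numbers under the Morse factor. For a generic linear $k$-codimensional slice $L$, the restriction $F|_L$ is again, by the Splitting Lemma with parameters, analytically of the form $g|_{L'}+Q$. Here $L'$ is a generic slice for $g$ and $Q$ is a nondegenerate quadratic in $m$ variables. Hence the generic-slice invariants of $F$ agree with those of $g$, and the minimal tuples coincide.

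I expect this genericity comparison to be the delicate point.
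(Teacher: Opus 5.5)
You have a genuine gap, and it is exactly the genericity step you flag.

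\textbf{What is correct.} Your Steps 1 and 2 are sound: the splitting with $\mult_\0 g\geq 3$, the identity $\Sigma F=\{\0\}\times\Sigma g$, and, in product coordinates $(\mbf u,\mbf w)$ with the $w$'s last, $\Lambda^k_F=\{\0\}\times\Lambda^k_g$, hence $\lambda^k_{F,(\mbf u,\mbf w)}(\0)=\lambda^k_{g,\mbf u}(\0)$. (Minor slip: $\lambda^k$ is $(\Lambda^k\cdot V(z_0,\dots,z_{k-1}))_\0$, not $(\Gamma^{k+1}\cdot V(\partial F/\partial z_k))_\0$; your cycle identity still gives the equality.)

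\textbf{What is missing.} The generic tuple is the lexicographic minimum over all coordinate choices, so Step 2 proves only $\lambda^\ast_F(\0)\leq_{\mathrm{lex}}\lambda^\ast_g(\0)$. The reverse inequality is the step you leave open, and none of your sketches supplies it.
\begin{itemize}
\item Upper semicontinuity and Corollary 4.16 of \cite{lecycles} only say that special coordinates give lexicographically larger (or equal) tuples than generic ones. That is the direction you already have. What you must exclude is a jump at the product flag, and ``moving'' a generic flag to a product flag by a linear change of coordinates destroys the split form of $F$.
\item The claim $F_{|_L}\cong Q+g_{|_{L'}}$ is not what the splitting lemma with parameters gives. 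For $L=\{u''=Au'+Bw\}$ it gives $F_{|_L}\cong Q+\tilde g$. Since $w(u')=-\tfrac12B^T\nabla_{u''}g+\cdots$, $\tilde g$ differs from $g_{|_{L'}}$ by terms of order at least $2\mult_\0 g-2$. As $g_{|_{L'}}$ has non-isolated critical locus whenever $k<s$, nothing forces $\tilde g$ to be right-equivalent to $g_{|_{L'}}$.
\item Even granting that claim, you would still need a theorem expressing the generic L\^e numbers through slice data that is visibly unchanged by adding $Q$.
\item Your ``alternative route'' has the same defect. Sebastiani--Thom applies to $F$ because the variables separate, but a generic slice mixes $u$ and $w$, so it does not apply to $F_{|_L}$.
\end{itemize}

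\textbf{The missing idea.} The paper applies Sebastiani--Thom \emph{before} any slicing, to the perverse sheaf on the critical locus. By \cite{masseysebthom},
$\big(\phi_F[-1]\Z^\bullet_\U[n+1]\big)_{|_{\Sigma F}}\cong\big(\phi_g[-1]\Z^\bullet_{\U'}[n-m+1]\big)_{|_{\Sigma g}}$
under $\Sigma F=\{\0\}\times\Sigma g$ (the paper does one square at a time). By Chapter 10 of \cite{lecycles} or Theorem 3.6.ii of \cite{lemodtrace}, the generic L\^e numbers are the ranks of the degree-$0$ stalk cohomology of iterated nearby and vanishing cycles of this sheaf along generic linear forms. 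The sheaf is supported in $\{\0\}\times\C^{n-m+1}$, so these iterated cycles see only the restrictions of the linear forms to that subspace. Generic forms on $\C^{n+1}$ restrict to generic forms there. This gives equality of the generic tuples directly, with no comparison of flags or slices. Your Step 2 then becomes a correct but unnecessary bonus: product coordinates are numerically generic for $F$.
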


\medskip

Given the Morse Splitting Lemma and the analytic invariance of the generic L\^e numbers, we require only one lemma to prove \thmref{thm:mainintro}:

\begin{lem}\label{lem:mainintro} Suppose that $f(w, y_1, \dots, y_n) = w^2+ g(y_1, \dots, y_n)$ for some $g\in\C\{y_1, \dots, y_n\}$. Then $\Sigma f=\{0\}\times \Sigma g$ and the generic L\^e numbers of $f$ and $g$ at the origin(s) are equal.
\end{lem}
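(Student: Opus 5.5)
The first claim is a direct computation. Since $\partial f/\partial w = 2w$ and $\partial f/\partial y_i = \partial g/\partial y_i$, the critical locus is
$$
\Sigma f = V\Big(w, \tfrac{\partial g}{\partial y_1}, \dots, \tfrac{\partial g}{\partial y_n}\Big) = \{0\}\times\Sigma g .
$$
In particular $\dim_\0\Sigma f=\dim_\0\Sigma g = s$, and $\Sigma f\subseteq V(f)$ exactly when $\Sigma g\subseteq V(g)$. For the L\^e numbers I would work directly from Definition 1.11 of \cite{lecycles}. That definition builds the relative polar varieties $\Gamma^{k+1}_{f,\mbf z}$ and L\^e cycles $\Lambda^k_{f,\mbf z}$ from the gap cycles
$$
V\Big(\tfrac{\partial f}{\partial z_{k+1}}, \dots, \tfrac{\partial f}{\partial z_n}\Big) \qquad\text{and}\qquad \Gamma^{k+1}_{f,\mbf z}\cdot V\Big(\tfrac{\partial f}{\partial z_k}\Big),
$$
and then intersects the $\Lambda^k$ with generic linear slices $V(z_0,\dots,z_{k-1})$.

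The first key step is to use the split coordinates $\mbf z=(y'_1,\dots,y'_n,w)$, where $\mbf y'$ is any linear coordinate system on $\C^n$ and $w$ is placed last. The last partial is then $2w$, which cuts out the smooth hyperplane $V(w)$ transversally and with multiplicity one. Every subsequent cycle in the construction is therefore $\{0\}\times(\text{the corresponding cycle for } g \text{ in the coordinates } \mbf y')$, with the index shifted so that $\Gamma^{k+1}_{f,\mbf z}=\{0\}\times\Gamma^{k+1}_{g,\mbf y'}$ and $\Lambda^k_{f,\mbf z}=\{0\}\times\Lambda^k_{g,\mbf y'}$ as cycles. The components contained in $\Sigma f$ removed at each stage correspond exactly to those contained in $\Sigma g$. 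Intersection numbers at the origin with $V(z_0,\dots,z_{k-1})$ are unchanged, so $\lambda^k_{f,\mbf z}(\0)=\lambda^k_{g,\mbf y'}(\0)$ for all $k$, and the L\^e numbers of $f$ are defined in $\mbf z$ iff those of $g$ are defined in $\mbf y'$. Taking $\mbf y'$ numerically generic for $g$ gives a coordinate system for $f$ realizing the tuple of generic L\^e numbers of $g$. Hence, lexicographically, the generic tuple for $f$ is at most the generic tuple for $g$.

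The main obstacle is the reverse inequality: a numerically generic linear coordinate system for $f$ need not be split. My plan is to compare cycles rather than numbers, using Corollary 4.16 of \cite{lecycles} and the fact recalled in the Introduction that the generic L\^e numbers are analytic invariants. Let $\mbf z$ be numerically generic for $f$. By Corollary 4.16 of \cite{lecycles}, the L\^e numbers are constant on a Zariski open set of linear coordinates, so I may take $\mbf z$ additionally generic in every sense needed. Then I would show that there is an analytic change of coordinates, tangent to the identity on the $\mbf y$-directions, which carries $\mbf z$ to a split system. Concretely, I would first apply the splitting lemma to $f$ restricted to each generic flag member $V(z_0,\dots,z_{k-1})$: the hessian of $f$ keeps rank one there for generic flags, so the restriction is again of the form $\tilde w^2+\tilde g_k$. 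Then I would check that the resulting $\Gamma$ and $\Lambda$ cycles are the images of the split ones. If that proves too delicate, the fallback is a deformation argument: join the generic $\mbf z$ to a split system $(\mbf y',w)$ through a path in the Zariski open set where the L\^e numbers of $f$ are defined and constant. This is possible because the complement of a proper analytic subset of $GL_{n+1}(\C)$ is connected, provided split systems with $\mbf y'$ generic for $g$ lie in that open set. That proviso is exactly the point to verify, via the cycle identification $\Lambda^k_{f,\mbf z}=\{0\}\times\Lambda^k_{g,\mbf y'}$ from the first step. Once it holds, the two tuples agree and $\lambda^k_f(\0)=\lambda^k_g(\0)$ for all $0\le k\le s$.
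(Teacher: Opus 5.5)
Your computation of $\Sigma f$ and of the L\^e cycles in a split system $\mbf z=(\mbf y',w)$ with $w$ last is correct. There $\Gamma^n_{f,\mbf z}=V(w)$, so $\Gamma^{k+1}_{f,\mbf z}=\{0\}\times\Gamma^{k+1}_{g,\mbf y'}$ and $\Lambda^k_{f,\mbf z}=\{0\}\times\Lambda^k_{g,\mbf y'}$. Hence the generic tuple of $f$ is lexicographically at most that of $g$. The gap is the reverse inequality, and neither of your plans supplies it.

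The fallback is circular. Split systems form a proper Zariski-closed subset of $GL_{n+1}(\C)$, and a dense Zariski-open set $\Omega$ on which the L\^e numbers of $f$ take their generic values need not meet it. Given your first step, the ``proviso'' that split systems with $\mbf y'$ generic for $g$ lie in $\Omega$ amounts to the lemma itself. The cycle identification cannot verify it: it computes L\^e numbers only \emph{at} split systems and gives no lower bound at non-split ones. The connectedness of $\Omega$ plays no role either. If a split system lies in $\Omega$, no path is needed; if not, no path helps.

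Plan A does not close the gap either. A generic flag member $F=V(z_0,\dots,z_{k-1})$ with $k\ge 1$ does not contain the $w$-axis. So it is a graph $w=a(\mbf y)$ over a linear subspace $F'\subseteq\C^n$, and $f_{|_F}\cong a^2+g_{|_{F'}}$. Splitting off the square eliminates $a$ through $2a+\partial(g_{|_{F'}})/\partial a=0$. The resulting $\tilde g_k$ is in general not a linear section of $g$, so comparing its L\^e numbers with those of $g$ is the original problem again. Moreover, $\Gamma^{k+1}_{f,\mbf z}$ is the relative polar variety of $f$ with respect to the linear projection $(z_0,\dots,z_k)$. L\^e numbers relative to a fixed linear system are not transported by a nonlinear change of coordinates, so ``carrying $\mbf z$ to a split system'' does not identify the cycles.

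What is missing is a description of the generic L\^e numbers that does not depend on a choice of coordinates. The paper uses that they are the ranks of the degree-$0$ stalk cohomology of iterated nearby and vanishing cycles, along generic linear forms, of the perverse sheaf $\phi_f[-1]\Z^\bullet_\U[n+1]$, which is supported on $\Sigma f$. It combines this with the derived-category Sebastiani--Thom isomorphism $\big(\phi_f[-1]\Z^\bullet_\U[n+1]\big)_{|_{\Sigma f}}\cong\big(\phi_g[-1]\Z^\bullet_{\U'}[n]\big)_{|_{\Sigma g}}$ under $\Sigma f=\{0\}\times\Sigma g$. Only the restrictions of the linear forms to $\{0\}\times\C^n$ matter, and generic forms restrict to generic forms. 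So this argument treats all coordinate systems at once, whereas your split computation yields only one of the two inequalities.
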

\noindent We prove the above lemma in \secref{sec:mainproof}.

\bigskip

In \secref{sec:small}, we look at corollaries to \thmref{thm:mainintro} when $\lambda^0_f(\0)\leq 3$. In \secref{sec:smalls}, we consider the special case where $s\leq 1$; in this case, it is very easy to say what numerically generic coordinates are.

\bigskip

\section{Proof of the main lemma and theorem}\label{sec:mainproof}

As we wrote in the introduction, the only extra piece required to prove \thmref{thm:mainintro} is the proof of \lemref{lem:mainintro}. Rather than an algebraic proof, we give an elegant, quick topological proof.

\medskip

\noindent{\bf Proof}:

\smallskip

By Chapter 10 of \cite{lecycles} or   Theorem 3.6.ii of \cite{lemodtrace}, the L\^e numbers of $f$ at $\0$ with respect to generic coordinates are the ranks of the stalk cohomology in degree 0 of the iterated nearby and vanishing cycles using the generic coordinate functions applied to the vanishing cycles $\phi_f[-1]\Z^\bullet_\U[n+1]$. Since the support of $\phi_f[-1]\Z^\bullet_\U[n+1]$ is $\Sigma f$, nothing changes if we replace $\phi_f[-1]\Z^\bullet_\U[n+1]$ by the restriction
$$
\big(\phi_f[-1]\Z^\bullet_\U[n+1]\big)_{|_{\Sigma f}}.
$$

Now our Sebastiani-Thom Isomorphism in the derived category from \cite{masseysebthom} tells us that for $f(w, y_1, \dots, y_n) = w^2+ g(y_1, \dots, y_n)$, 
the perverse sheaf $\big(\phi_f[-1]\Z^\bullet_\U[n+1]\big)_{|_{\Sigma f}}$ is isomorphic to $\big(\phi_g[-1]\Z^\bullet_{\U'}[n]\big)_{|_{\Sigma g}}$, where we have identified $\Sigma f=\{0\}\times\Sigma g$ with $\Sigma g$. Therefore the ranks of the stalk cohomology in degree 0 of the iterated nearby and vanishing cycles using  generic coordinate functions applied to the vanishing cycles along $f$ and $g$ are equal.

This proves the lemma and, hence, proves the theorem.

\bigskip

\section{The cases where $\lambda^0\leq 3$}\label{sec:small}

For numerically generic coordinates $\mbf z$ for $f$ at $\0$,
$$
\lambda^0_f(\0)=\lambda^0_{f, \mbf z}(\0)=\left(\Gamma^1_{f, \mbf z}\cdot V\Big(\frac{\partial f}{\partial z_0}\Big)\right)_\0,
$$
where $\Gamma_f^1:=\Gamma^1_{f, \mbf z}$ is the (possibly non-reduced) relative polar curve of Hamm, L\^e, and Teissier as an analytic cycle. Using our notation from \cite{lecycles}, we let
$$
\gamma^1_f(\0):=\left(\Gamma^1_{f}\cdot V(z_0)\right)_\0= \mult_\0\Gamma^1_{f}.
$$

\bigskip

\noindent{\bf $\lambda^0_f(\0)=0$ case}:

\medskip

Since we are assuming that $\0\in\Sigma f$, if $\lambda^0_f(\0)=0$, then $\Gamma^1_f$ must be zero as a cycle (or empty as a set). This would, for instance, be the case if the hypersurface $V(f)$ were a product with a complex disk, i.e., $f(z_0, \dots, z_n)=g(z_1,\dots, z_n)$. More generally, by L\^e's Attaching Result in \cite{leattach}, $\lambda^0_f(\0)=0$ tells one that the Milnor fiber $F_{f,\0}$ of $f$ at $\0$ is the cross product of a complex disk with the Milnor fiber $F_{f_{|_{V(z_0)}}, \0}$.

\bigskip

\noindent Now we will consider cases where $\lambda^0_f(\0)\neq0$.

\medskip

When $\Gamma_f^1\neq 0$, there is the fundamental inequality:
$$
\lambda^0_f(\0)=\left(\Gamma^1_{f, \mbf z}\cdot V\Big(\frac{\partial f}{\partial z_0}\Big)\right)_\0\geq \left(\mult_\0 \Gamma^1_{f, \mbf z}\right)\left(\mult_\0 \frac{\partial f}{\partial z_0}\right).
$$
Since $z_0$ is generic, 
$$
\mult_\0 \frac{\partial f}{\partial z_0}= -1+\mult_\0 f.
$$
Furthermore, from \cite{levan}, we know that the complex link of $V(f)$ at the origin has the homotopy-type of a wedge of  $\mult_\0 \Gamma^1_{f, \mbf z}$ $(n-1)$-spheres.

\medskip

Now, from the above discussion and induction,  the following corollaries to \thmref{thm:mainintro} are immediate.

\medskip

\begin{cor}\label{cor:1case} Suppose that $\lambda^0_f(\0)=1$. Then $s=0$ and $f$ has a complex nondegenerate critical point at $\0$, i.e., the Milnor number of $f$ at $\0$ is 1.
\end{cor}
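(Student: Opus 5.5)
Our plan is to argue by induction on $n$, using \thmref{thm:mainintro} to strip off Morse squares and the fundamental inequality to force the multiplicity of $f$ to be $2$ at each stage.

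\textbf{Step 1: the multiplicity of $f$ is $2$.} Since $\lambda^0_f(\0)=1\neq 0$, the polar curve $\Gamma^1_f$ is nonzero, so the fundamental inequality applies:
$$
1=\lambda^0_f(\0)\geq \left(\mult_\0\Gamma^1_f\right)\left(\mult_\0 f-1\right).
$$
Both factors are positive integers. The first is at least $1$ because $\Gamma^1_f\neq 0$ passes through $\0$. The second is at least $1$ because $\0\in\Sigma f\subseteq V(f)$ forces $\mult_\0 f\geq 2$. Hence $\mult_\0 f=2$, so the hessian of $f$ at $\0$ has rank $m\geq 1$.

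\textbf{Step 2: reduce by the splitting lemma.} If $m=n+1$, \thmref{thm:mainintro} says $f$ has a nondegenerate critical point, so $s=0$ and the Milnor number is $1$, and we are done. This covers the base case $n=0$, since then $\mult_\0 f=2$ gives $f=$ unit$\cdot z_0^2$, whose hessian has rank $1=n+1$.

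If instead $m\leq n$, \thmref{thm:mainintro} gives
$$
f\circ\eta=w_1^2+\cdots+w_m^2+g,
$$
with $g$ a function of $n+1-m\geq 1$ variables, $\mult_\0 g\geq 3$, $\Sigma(f\circ\eta)=\{\0\}\times\Sigma g$, and equal generic L\^e numbers. In particular $\lambda^0_g(\0)=1$. Moreover $\0\in\Sigma g$, since $\Sigma(f\circ\eta)=\{\0\}\times\Sigma g$ contains the origin.

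\textbf{Step 3: contradiction.} Apply Step 1 to $g$; it is valid because $\0\in\Sigma g$ and we may shrink the domain so that $\Sigma g\subseteq V(g)$. This forces $\mult_\0 g=2$, contradicting $\mult_\0 g\geq 3$. So the case $m\leq n$ cannot occur, and $f$ has a nondegenerate critical point, hence $s=0$ and $\mu=1$. Equivalently, one can phrase this as the induction on $n$ mentioned before the corollary: $g$ has fewer variables, so by induction $g$ would have Milnor number $1$ and hence $\mult_\0 g=2$, the same contradiction.

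\textbf{Main obstacle.} The main point needing care is the degenerate situation in which $g$ depends on a single variable, or in which $\Sigma g$ is positive-dimensional while the genericity conventions still apply to $g$. Since the fundamental inequality only requires $\Gamma^1_g\neq 0$, which follows from $\lambda^0_g(\0)=1\neq 0$, we expect no problem there. We also need to check that "generic $z_0$" for $g$ makes sense in one variable: there $\lambda^0_g(\0)$ is simply the Milnor number $\mult_\0 g-1\geq 2$, which again contradicts $\lambda^0_g(\0)=1$.
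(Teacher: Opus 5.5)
Your proof is correct and is essentially the paper's argument. The fundamental inequality forces $\mult_\0 f=2$, and if the hessian rank were $m\leq n$, then \thmref{thm:mainintro} would transfer $\lambda^0=1$ to a $g$ with $\mult_\0 g\geq 3$, which the same inequality rules out; so, as you note, the paper's ``induction'' reduces to one application of your Step 1 to $g$, and Step 1 only needs $\mult_\0(\partial f/\partial z_0)\geq \mult_\0 f-1$, which holds in every coordinate system.
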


\medskip

\begin{rem} 
In Corollary 4.5 of \cite{lemodtrace}, part of what we conclude is \corref{cor:1case} in the special case where $\Sigma f$ is smooth at $\0$. In Remark 4.3 of \cite{lemod2}, we indicated that we did not know if the generic $\lambda^0_{f, \mbf z}(\0)$ could be 1 if $\dim_\0\Sigma f=1$; the corollary tells us that this is not possible.
\end{rem}

\medskip

\begin{cor} Suppose that $\lambda^0_f(\0)=2$ (resp., $=3$) and that the rank of the hessian matrix of $f$ at $\0$ is $m$.  Then $m\leq n$ and there is an analytic change of coordinates $\eta$ at the origin in $\C^{n+1}$ and $g\in\C\{u_0, \dots u_{n-m}\}$ such that
$$
(f\circ \eta)(w_1,\dots, w_m, u_1, \dots, u_{n-m}) = w_1^2+\dots+w_m^2+g(u_0, \dots, u_{n-m}),
$$
where 
\begin{itemize}
\item $\mult_\0 g= 3$ (resp., $=3$ or $4$), 
\medskip
\item $\gamma^1_g(\0)=1$ and so the complex link of $V(g)$ at the origin has the homotopy-type of a single $(n-m-1)$-sphere (provided $m\neq n$), 
\medskip
\item $\Sigma (f\circ h)=\{\0\}\times\Sigma g$, and 
\medskip
\item the generic L\^e numbers of $f$ equal the generic L\^e numbers of $g$, i.e., for all $k$ such that $0\leq k\leq s$, $\lambda^k_{f}(\0)=\lambda^k_{g}(\0)$.
\end{itemize}
\end{cor}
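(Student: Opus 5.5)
We apply \thmref{thm:mainintro} and then read off the constraints on $g$ from the fundamental inequality, applied to $g$ rather than to $f$.

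\emph{Step 1: $m\le n$ and the splitting.} If $m=n+1$, then $f$ has a nondegenerate isolated critical point. In that case $s=0$, and $\lambda^0_f(\0)$ is the Milnor number, which equals $1$. This contradicts $\lambda^0_f(\0)\in\{2,3\}$, so $m\leq n$. \thmref{thm:mainintro} then supplies $\eta$ and $g$ with the displayed splitting, $\mult_\0 g\geq 3$, $\Sigma(f\circ\eta)=\{\0\}\times\Sigma g$, and $\lambda^k_f(\0)=\lambda^k_g(\0)$ for all $k$. This already gives the third and fourth bullets. In particular $\lambda^0_g(\0)=\lambda^0_f(\0)$. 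Since $g$ is a function of $n-m+1$ variables, the complex link of $V(g)$ at the origin has the homotopy-type of a wedge of $\gamma^1_g(\0)$ spheres of dimension $(n-m)-1$.

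\emph{Step 2: bounding $\mult_\0 g$ and $\gamma^1_g(\0)$.} First, $\Gamma^1_g\neq 0$, because $\lambda^0_g(\0)\neq 0$ and, as noted in the $\lambda^0=0$ discussion, $\Gamma^1_g=0$ would force $\lambda^0_g(\0)=0$. Applying the fundamental inequality to $g$ in numerically generic coordinates gives
$$
\lambda^0_g(\0)\ \geq\ \gamma^1_g(\0)\,\big(\mult_\0 g-1\big)\ \geq\ \gamma^1_g(\0)\cdot 2 .
$$
\begin{itemize}
\item If $\lambda^0_g(\0)=2$: we get $\gamma^1_g(\0)=1$ and $\mult_\0 g-1\leq 2$. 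Combined with $\mult_\0 g\ge 3$, this gives $\mult_\0 g=3$.
\item If $\lambda^0_g(\0)=3$: we get $2\gamma^1_g(\0)\leq 3$, so $\gamma^1_g(\0)=1$. Then $\mult_\0 g-1\leq 3$, so $\mult_\0 g\in\{3,4\}$.
\end{itemize}
The claim about the complex link follows from \cite{levan}, applied to $g$, as recalled before \corref{cor:1case}.

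\emph{Main obstacle: the degenerate case $m=n$.} Here $g$ is a function of one variable, $u_0$, and the polar curve / complex link statements degenerate, which is why the corollary excludes this case for the complex-link claim. I would check directly that the remaining claims still hold. Write $g=u_0^k\cdot(\text{unit})$ with $k\geq 3$. Then $s=0$ and $\lambda^0_g(\0)=\mu=k-1$, so $k=3$ (resp.\ $k=4$), consistent with the statement. In this one-variable case $\Gamma^1_g$ is $V(\text{nothing})=\C$, so $\gamma^1_g(\0)=1$ trivially.

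Beyond that, the only point needing care is that the fundamental inequality is applied to $g$ with coordinates that are numerically generic for $g$. These exist, and because the generic L\^e numbers are invariant under analytic isomorphism, $\lambda^0_g(\0)$ computed in such coordinates equals $\lambda^0_f(\0)$.
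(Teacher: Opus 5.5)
Your proposal is correct and is essentially the argument the paper leaves as ``immediate''. You rule out $m=n+1$ because then $\lambda^0_f(\0)=\mu=1$, pass to $g$ via \thmref{thm:mainintro} (so $\mult_\0 g\geq 3$ and $\lambda^0_g(\0)=\lambda^0_f(\0)$), and then use the fundamental inequality $\lambda^0_g(\0)\geq\gamma^1_g(\0)(\mult_\0 g-1)$ together with $\Gamma^1_g\neq 0$ and \cite{levan} to force $\gamma^1_g(\0)=1$ and the stated multiplicities. Your separate check of the case $m=n$ is harmless but unnecessary, since the inequality already covers it (with equality); also, the implication $\Gamma^1_g=0\Rightarrow\lambda^0_g(\0)=0$ that you need follows directly from the definition of $\lambda^0$, not from the converse stated in the $\lambda^0=0$ discussion.
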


\bigskip

\section{1-dimensional Critical Loci}\label{sec:smalls}

If $\dim_\0\Sigma f=1$, then -- looking at the definitions in \cite{lecycles} -- one finds that the L\^e numbers $\lambda^*_{f, \mbf z}(\0)$ are defined if and only  $z_0$ is prepolar at $\0$, which is true if and only if $\dim_\0\Sigma (f_{|_{V(z_0)}})= 0$. Furthermore, for all coordinates for which the L\^e numbers are defined, the 1-dimensional L\^e cycle $\Lambda^1_{f,\mbf z}$ is independent of the coordinates; as a set, it consists simply of  $\Sigma f$ and, as a cycle, each irreducible component of $\Sigma f$ at the origin is multiplied by the Milnor number of a transverse hyperplane slice at a point near, but unequal to, the origin.

Assume $\dim_\0\Sigma f=1$  and that the L\^e numbers at $\0$ are defined. By Theorem 3.3 of \cite{lecycles}, the Euler characteristic of the Milnor fiber and the L\^e numbers are related by
$$
\chi(F_{f, \0})=1+(-1)^{n-1}\lambda^1_{f, \mbf z}(\0)+(-1)^n\lambda^0_{f, \mbf z}(\0),
$$
and so the value of $\lambda^0_{f, \mbf z}(\0)-\lambda^1_{f, \mbf z}(\0)$ is independent of the choice of coordinates for which the L\^e numbers are defined. Hence, when $\lambda^1_{f, \mbf z}(\0)$ has its minimum value, so does $\lambda^0_{f, \mbf z}(\0)$ and the coordinates $\mbf z$ would be numerically generic.

\medskip

As $\Lambda^1_{f,\mbf z}$ is independent of the coordinates (provided the L\^e numbers are defined), we conclude:

\begin{prop}\label{prop:1dim} Suppose that $\dim_\0\Sigma f=1$. Then the L\^e numbers attain their numerically generic values using the coordinates $\mbf z$  if and only if   $$\dim_\0\Sigma (f_{|_{V(z_0)}})= 0\hskip 0.1in\textnormal{ and }\hskip 0.1in \mult_\0|\Sigma f|=\big(|\Sigma f|\cdot V(z_0)\big)_\0,$$ where $|\Sigma f|$ denotes the critical locus with its reduced structure, i.e., as an analytic set.
 \end{prop}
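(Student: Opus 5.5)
The plan is to use the preceding discussion, which reduces numerical genericity to minimizing $\lambda^1_{f,\mbf z}(\0)$. The first step is definedness: the L\^e numbers with respect to $\mbf z$ are defined exactly when $\dim_\0\Sigma(f_{|_{V(z_0)}})=0$, i.e., when $z_0$ is prepolar at $\0$. Numerically generic coordinates must in particular have defined L\^e numbers, so this condition is necessary, and we assume it from now on. By the Euler characteristic formula (Theorem 3.3 of \cite{lecycles}), $\lambda^0_{f,\mbf z}(\0)-\lambda^1_{f,\mbf z}(\0)$ does not depend on $\mbf z$. Hence minimizing the tuple $\big(\lambda^1,\lambda^0\big)$ lexicographically is the same as minimizing $\lambda^1_{f,\mbf z}(\0)$ alone.

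The second step is to write $\lambda^1_{f,\mbf z}(\0)$ explicitly. Let $C_1,\dots,C_r$ be the irreducible components of $|\Sigma f|$ at $\0$, and let $\mu^\circ_i$ be the Milnor number of a transverse hyperplane slice at a generic point of $C_i$ near $\0$. By the coordinate-independence of $\Lambda^1_{f,\mbf z}=\sum_i\mu^\circ_i[C_i]$,
$$
\lambda^1_{f,\mbf z}(\0)=\big(\Lambda^1_{f,\mbf z}\cdot V(z_0)\big)_\0=\sum_i \mu^\circ_i\,\big(C_i\cdot V(z_0)\big)_\0 .
$$
Each intersection number satisfies $\big(C_i\cdot V(z_0)\big)_\0\geq \mult_\0 C_i$, with equality if and only if $V(z_0)$ does not contain any line of the tangent cone of $C_i$ at $\0$. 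Since every $\mu^\circ_i\geq 1$, the sum attains its minimum $\sum_i\mu^\circ_i\mult_\0 C_i$ if and only if equality holds for every $i$. That is equivalent to equality in the summed inequality $\big(|\Sigma f|\cdot V(z_0)\big)_\0\geq\mult_\0|\Sigma f|=\sum_i\mult_\0 C_i$.

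The third step is to check that this minimum is actually attained by coordinates for which the L\^e numbers are defined, so that it really is the generic value. For a generic linear form $z_0$, the hyperplane $V(z_0)$ is prepolar at $\0$, so the L\^e numbers are defined (Theorem 1.28 of \cite{lecycles}). Such a $z_0$ also avoids the finitely many tangent lines of the components $C_i$, so the multiplicity equality holds. Both conditions are open and dense, and together they realize the minimal value of $\lambda^1$.

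Putting the steps together gives both implications. If $\mbf z$ is numerically generic, its L\^e numbers are defined, so prepolarity holds, and $\lambda^1_{f,\mbf z}(\0)$ equals the attained minimum, which forces the multiplicity equality. Conversely, the two conditions give defined L\^e numbers with minimal $\lambda^1$, and hence minimal $\lambda^0$ by the first step. The only potentially delicate point is the standard fact that the intersection multiplicity of a curve with a hyperplane equals the multiplicity exactly when the hyperplane is transverse to the tangent cone. The step needing the most care is not omitting the positivity $\mu^\circ_i\geq 1$, which holds because each $C_i$ consists of critical points.
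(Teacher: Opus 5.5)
Your proposal is correct and follows the paper's argument. Prepolarity characterizes when the L\^e numbers are defined, and the Euler characteristic formula makes $\lambda^0_{f,\mbf z}(\0)-\lambda^1_{f,\mbf z}(\0)$ coordinate-independent, so genericity reduces to minimizing $\lambda^1_{f,\mbf z}(\0)=(\Lambda^1_{f,\mbf z}\cdot V(z_0))_\0$, which the coordinate-independence of $\Lambda^1_{f,\mbf z}$ turns into the multiplicity condition; you additionally make explicit what the paper leaves implicit, namely the positivity of the weights $\mu^\circ_i$, the componentwise inequality, and the fact that a generic $z_0$ attains the minimum.
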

 
 \medskip
 
 Finally, we give an example where $\dim_\0\Sigma f=1$, the coordinates $\mbf z$ are prepolar (so the L\^e numbers are defined), and $\lambda^0_{f, \mbf z}(\0)=1$.  By \corref{cor:1case}, we know that such $\mbf z$ cannot be numerically generic.
 
 \begin{exm} Let $f(x,y,z)= z^2+(x-y^2)^2$. Then,  $\Sigma f= V(x-y^2, z)$. Let $\mbf u$ be the coordinates $(x,y,z)$ and let $\mbf v$ be the coordinates $(y,x,z)$. By \propref{prop:1dim}, the coordinates $\mbf u$ are not numerically generic, but the coordinates $\mbf v$ are.
 
 The reader is invited to calculate:
$$\lambda^0_{f, \mbf u}(\0)=1, \ \lambda^1_{f, \mbf u}(\0)=2, \hskip 0.1in \textnormal{ and }\hskip 0.1in \lambda^0_{f, \mbf v}(\0)=0, \ \lambda^1_{f, \mbf v}(\0)=0.$$
 \end{exm}
 
 \bigskip

\bibliographystyle{plain}

\bibliography{Masseybib}

\begin{thebibliography}{10}

\bibitem{GLS}
{Greuel, G.-M., Lossen, C., and Shustin, E.}
\newblock {\em {Introduction to Singularities and Deformations}}.
\newblock {Springer Monographs in Math.} {Springer}, 2025.

\bibitem{leattach}
{L\^e, D. T.}
\newblock {Calcul du Nombre de Cycles \'Evanouissants d'une Hypersurface
  Complexe}.
\newblock {\em Ann. Inst. Fourier, Grenoble}, 23:261--270, 1973.

\bibitem{levan}
{L\^e, D. T.}
\newblock {Sur les cycles \'evanouissants des espaces analytiques}.
\newblock {\em C. R. Acad. Sci. Paris, S\'er. A-B}, 288:A283--A285, 1979.

\bibitem{lemassey}
{Le, D. T., Massey, D.}
\newblock {Hypersurface Singularities and Milnor Equisingularity}.
\newblock {\em Pure and Appl. Math. Quart., special issue in honor of Robert
  MacPherson's 60th birthday}, 2, no.3:893--914, 2006.

\bibitem{levar1}
{Massey, D.}
\newblock {The L\^e Varieties, I}.
\newblock {\em Invent. Math.}, 99:357--376, 1990.

\bibitem{levar2}
{Massey, D.}
\newblock {The L\^e Varieties, II}.
\newblock {\em Invent. Math.}, 104:113--148, 1991.

\bibitem{lecycles}
{Massey, D.}
\newblock {\em {L\^e Cycles and Hypersurface Singularities}}, volume 1615 of
  {\em Lecture Notes in Math.}
\newblock Springer-Verlag, 1995.

\bibitem{masseysebthom}
{Massey, D.}
\newblock {The Sebastiani-Thom Isomorphism in the Derived Category}.
\newblock {\em Compos. Math.}, 125:353--362, 2001.

\bibitem{lemodtrace}
{Massey, D.}
\newblock {L\^e Modules and Traces}.
\newblock {\em Proc. AMS}, 134 (7):2049--2060, 2006.

\bibitem{lemod2}
{Massey, D.}
\newblock Lê modules and hypersurfaces with one-dimensional singular sets.
\newblock {\em Acta Math. Vietnamica}, 2026.
\newblock to appear.

\end{thebibliography}

\end{document}